\newcommand{\fsize}{10pt} 
\newcommand\arxiv[1]{\href{https://www.arxiv.org/abs/#1}{arXiv:~#1}}
\newcommand\doi[1]{\href{https://doi.org/#1}{DOI:~#1}}
\newif\ifdraft
\newif\iflong
\newif\ifarxivversion
\newcounter{mnotecount}[section]
\renewcommand{\themnotecount}{\thesection.\arabic{mnotecount}}
\newcommand{\mnote}[1]{\protect{\stepcounter{mnotecount}}${\raisebox{0.5\baselineskip}[0pt]{\makebox[0pt][c]{\tiny\em{\red{$\bullet$\themnotecount}}}}}$\marginpar{\raggedright\tiny\em $\!\!\!\!\!\!\,\bullet$\themnotecount: #1}\ignorespaces}
\newcommand{\mnote}[1]{}
\definecolor{darkgreen}{rgb}{0,0.6,0}
\definecolor{darkred}{rgb}{0.7,0,0}
\definecolor{darkblue}{rgb}{0,.1,.6}
\definecolor{darkgray}{rgb}{0.3,.3,.3}
\newcommand\red[1]{{\color{red}{#1}}}
\renewcommand\sout{\bgroup\markoverwith
{\textcolor{red}{\rule[0.7ex]{3pt}{1.4pt}}}\ULon}
\let\epsilon\varepsilon
\let\theta\vartheta
\def\phi{{\varphi}}
\newcommand\ie{i.\thinspace e.\ \ignorespaces}
\newcommand\eg{e.\thinspace g.\, \ignorespaces}
\newcommand\Dirac{\slashed{D}}
\newcommand\cE{\mathcal{E}}
\newcommand\cR{\mathcal{R}}
\newcommand\tr{\mathrm{tr}}
\newcommand\Ker{\mathrm{ker}}
\newcommand\Cl{\mathrm{Cl}}
\newcommand\dvol{\,\mathrm{dvol}}
\newtheorem{theorem}{Theorem}
\newtheorem{corollary}[theorem]{Corollary}
\theoremstyle{remark}
\newtheorem{remark}[theorem]{Remark}
\newtheorem*{remark*}{Remark}
\newlength{\keyheightlength}
\newcommand{\stellle}[1]{\mathchoice%
  {\mbox{\lower.15\keyheightlength\hbox{\upshape|}}_{#1}}%
  {\mbox{\lower.12\keyheightlength\hbox{\upshape|}}_{#1}}%
  {\mbox{\lower.1\keyheightlength\hbox{\upshape|}}_{#1}}%
  {\mbox{\lower.09\keyheightlength\hbox{\upshape|}}_{#1}}%
}
\newcommand{\stelle}[1]{\mathchoice%
  {\lower.27\keyheightlength\hbox{\big|}\lower.57\keyheightlength\hbox{$\scriptstyle #1$}}%
  {\lower.24\keyheightlength\hbox{\big|}\lower.48\keyheightlength\hbox{$\scriptstyle #1$}}%
  {\lower.21\keyheightlength\hbox{|}\lower.42\keyheightlength\hbox{$\scriptscriptstyle #1$}}%
  {\lower.18\keyheightlength\hbox{|}\lower.39\keyheightlength\hbox{$\scriptscriptstyle #1$}}%
}
\newcommand{\Stelle}[1]{\mathchoice%
  {\lower.39\keyheightlength\hbox{\Big|}\lower.74\keyheightlength\hbox{$\scriptstyle #1$}}%
  {\lower.33\keyheightlength\hbox{\big|}\lower.66\keyheightlength\hbox{$\scriptstyle #1$}}%
  {\lower.30\keyheightlength\hbox{\big|}\lower.60\keyheightlength\hbox{$\scriptscriptstyle #1$}}%
  {\lower.25\keyheightlength\hbox{\big|}\lower.51\keyheightlength\hbox{$\scriptscriptstyle #1$}}%
}
\newcommand{\sstelle}[1]{\mathchoice%
  {\lower.39\keyheightlength\hbox{\bigg|}\lower1.2\keyheightlength\hbox{$\scriptstyle #1$}}%
  {\lower.32\keyheightlength\hbox{\Big|}\lower.74\keyheightlength\hbox{$\scriptstyle #1$}}%
  {\lower.32\keyheightlength\hbox{\big|}\lower.75\keyheightlength\hbox{$\scriptstyle #1$}}%
  {\lower.29\keyheightlength\hbox{\big|}\lower.63\keyheightlength\hbox{$\scriptstyle #1$}}%
}
\newcommand{\SStelle}[1]{\mathchoice%
  {\lower.40\keyheightlength\hbox{\Bigg|}\lower1.35\keyheightlength\hbox{$\scriptstyle #1$}}%
  {\lower.37\keyheightlength\hbox{\bigg|}\lower.98\keyheightlength\hbox{$\scriptstyle #1$}}%
  {\lower.35\keyheightlength\hbox{\Big|}\lower.90\keyheightlength\hbox{$\scriptscriptstyle #1$}}%
  {\lower.30\keyheightlength\hbox{\big|}\lower.72\keyheightlength\hbox{$\scriptscriptstyle #1$}}%
}
\begin{document}

\title{Are all Dirac-harmonic maps uncoupled?}
\author[B. Ammann]{Bernd Ammann} \address{B. Ammann, Fakult\"at f\"ur
  Mathematik, Universit\"at Regensburg, 93040 Regensburg, Germany}
\email{bernd.ammann@mathematik.uni-regensburg.de}

\thanks{The author was supported by SPP 2026 (Geometry at
  infinity) and the SFB 1085 (Higher Invariants), both funded by the
  DFG (German Science Foundation).}
\thanks{MSC:  58E20 (Primary), 53C43, 53C27 (Secondary)}

\begin{abstract}Dirac-harmonic maps $(f,\phi)$ consist of a map $f:M\to N$ and a twisted spinor $\phi\in\Gamma(\Sigma M\otimes f^*TN)$ and they are defined as critical points of the super-symmetric energy functional.
A Dirac-harmonic map is called \emph{uncoupled}, if $f$ is a harmonic map.
We show that under some minimality assumption Dirac-harmonic maps defined
on a closed domain are uncoupled.
\end{abstract}

\date\today

\maketitle

\section{Introduction}

Let $M$ and $N$ be Riemannian manifolds, and $f:M\to N$ a $C^1$. If $M$ is compact, we can define the energy of $f$ as
  $$\cE_1(f) = \frac12 \int_M |df|^2\dvol^g$$
considered as a functional on $C^1(M,N)$. Critical points of this functional are called \emph{harmonic maps}, and there is an extensive literature about harmonic maps, with many interesting applications, also weak solutions were studied.

In the recent years, there is a growing number of publications about a super-symmetric analogue of harmonic maps, called \emph{Dirac-harmonic maps}. They are defined as critical points of the sypersymmetric energy functional $\mathcal{E}$ defined in \eqref{eq.energy.supersym}, see Section~\ref{sec.main.idea} for more details.
In particular, a Dirac-harmonic map is a pair $(f,\phi)$ of a map $f:M\to N$ and a twisted spinor $\phi\in\Gamma(\Sigma M\otimes f^*TN)$. An early article~\cite{chen.jost.li.wang:05} about such maps was written in 2005 by Chen, Jost, Li and Wang, studying regularity issues for Dirac-harmonic maps, followed by \cite{chen.jost.li.wang:06,chen.jost.wang:07} and stimulating many associated questions and results. Between 2005 and September 2022, MathSciNet lists about 45 publications with ``Dirac-harmonic'' in the title. Many questions answered for harmonic maps can be discussed in the Dirac-harmonic context, often it is hard to include the spinorial part into the estimates, and good progress was achieved.

Obiously, Dirac-harmonic maps with $\phi\equiv 0$ are uninteresting, as then $(f,\phi)$ is Dirac-harmonic if and only if $f$ is harmonic; such solution are called \emph{spinor-trivial}. Similarly, solutions with $f$ constant, called \emph{map-trivial} solutions, are not in the focus of interest within this subject either; in this case the problem is equivalent to finding harmonic spinors in the classical sense, see \eg \cite{hitchin:74,baer:96b,ammann.dahl.humbert:09}. We say that a Dirac-harmonic map is \emph{uncoupled} if $f$ is harmonic, otherwise we say that this Dirac-harmonic map is \emph{coupled}.
Being uncoupled is equivalent to the vanishing of the $\cR$-term, see Section~\ref{sec.main.idea}. A major question discussed in this article is whether coupled Dirac-harmonic maps with compact domain exist. Let us discuss examples of Dirac-harmonic maps in the literature first.

Progress in constructing Dirac-harmonic maps was achieved in \cite{jost.mo.zhu:09} by using (untwisted) harmonic spinors, twistor spinors and similar solutions of other spinorial equations, although it remained unclear, how one could get solutions of these spinorial equations. This was analyzed later by Ginoux and the author in \cite{ammann.ginoux:19}: we showed -- see \cite[Theorems 1.1 and~1.3]{ammann.ginoux:19} -- that the conditions in \cite[Theorems~1 and 3]{jost.mo.zhu:09} can be satisfied only in exceptional cases.\footnote{Note that the proof of \cite[Theorems~1]{jost.mo.zhu:09} uses that the immersion is isometric, although this is not stated in this theorem \cite[Theorems~1]{jost.mo.zhu:09} explicitly.} In particular, for $\dim M\geq 3$ and $M$ complete, one concludes that~$M$ has to be simply-connected of constant negative curvature. Other strong obstructions exist for  $\dim M=2$, in particular $f$ has to be necessarily harmonic, thus any Dirac-harmonic map with compact domain obtained this way is uncoupled. The solutions in \cite[Theorems~2]{jost.mo.zhu:09}, reproven as \cite[Corollary~2.3]{ammann.ginoux:19} are uncoupled as well.

Many more Dirac-harmonic maps $(f,\phi)$ were constructed in  \cite{ammann.ginoux:13}. Here, the domain $M$ of $f$ is closed. The method starts with a harmonic map $f:M\to N$ and then one uses index theory to get a non-vanishing harmonic spinor $\phi\in \Gamma(\Sigma M\otimes f^*T^*N)$. In particular, these solutions are uncoupled.

In the special case $M=N=S^2$ with the round metric, it was shown in \cite{yang_ling:09} that every Dirac-harmonic map is uncoupled, and this was recently reproven in  \cite[Proposition 1.1]{jost.sun.zhu:p22}.

Summarizing this, the author is not aware of any publication in which the existence of any coupled Dirac-harmonic map with compact domain was proven.

In the current article, we will show, that ``generically'' every Dirac-harmonic map with closed domain is uncoupled, see Corollary~\ref{dirac-harm.trivial}  for details. The argument may be adapted to compact manifolds $M$ with boundary for many suitable homogeneous boundary conditions, an extension of the results that we will not work out.\mnote{more details} 

It thus remains questionable whether coupled Dirac-harmonic maps with closed domain actually exist. Suppose such solutions did not exist, then the main results in several recent publications would not provide new result, compared to what is known already for harmonic maps. However, spectral flow methods might possibly be used to construct coupled Dirac-harmonic maps with closed domain. Thus it is hard to predict whether an overseen reason implies $\cR=0$ for $M$ closed, or whether we did not yet find the right tools to find coupled solutions.

\textbf{What we do not discuss here.}
In order to avoid missunderstandings, we want to add here some issues that we \emph{do not} prove in this article. First, there is a variant of the super-symmetric energy functional that includes a quartic spinor term, depending on the curvature of the target manifold $N$, see \eg \cite[(2.4)]{jost.liu.zhu:22}. Orally, I was told, that this variant is even more important from the perspective of applications to physics. I am unable to adapt the arguments of the current article to this modified functional. On the other hand I expect that
many publications about Dirac-harmonic maps are sufficiently robust in order to be applicable also to critical points of this modified functional.
Similar arguments apply to other perturbations of the supersymmetric energy functional. As a consequence, the techniques in the articles listed above would provide valuable contributions even when a non-existence result for coupled solution on closed domains were finally proven.

Second, we did not yet discuss the case of non-compact domains. Our argument relies essentially on the fact that the Euler-Lagrange equations \eqref{eq.lagrange} describe the stationary points of a well-defined functional, that $\Dirac^f$ is self-adjoint and that we do not get boundary terms by partial integration. In \cite{jost.mo.zhu:09} and \cite{ammann.ginoux:19} an example of a Dirac-harmonic map with non-vanishing $\cR$-term is (implicitly) given: here $M^m$ is a simply connected complete Riemannian manifold of constant negative sectional curvature $-4/(m+2)$ (\ie a space form), and $f$ is an isometric embedding into $(m+1)$-dimensional hyperbolic space $N$, for which $f(M)$ is totally umbillic with parallel shape tensor in $N$. Due to the conformal covariance of the Dirac operator and of the Penrose operator, $M$ carries many twistor spinors and harmonic spinors, thus the Jost-Mo-Zhu method \cite{jost.mo.zhu:09}  may be applied, and one easily checks that $\tau(f)$ is a non-zero constant.
However, in this case $|df|^2$ is not integrable, and thus the super-symmetric energy functional $\cE$ does not converge. The Dirac-harmonic map $(f,\phi)$ solves the system of partial differential equations \eqref{eq.lagrange} without being the stationarity equation of a functional defined on all pairs of maps with spinors.

Third, we only considered solutions to the Dirac-harmonic map equation in the strong sense. We do not know to which extend our methods also generalize to weak settings. For the harmonic maps, weak solutions gave rise to involved research, see e.g.\ Bethuel's work \cite{bethuel-manus:93}.

\textbf{Acknowledgements.}
The result of this note grew out of discussions related to a pair of conferences in Shanghai and Sanya in spring 2018, organized by J\"urgen Jost, Miaomiao Zhu and others. My thanks goes to these organizers. I also thank my former PhD student Johannes Wittmann for his contributions. Thanks also to Christian B\"ar and Johannes Wittmann for comments on an earlier version of this preprint. Thanks also to Lei Liu for pointing out that I had overseen a boundary term in a previous version.
\section{Dirac-harmonic maps with closed domain are generically uncoupled}\label{sec.main.idea}

Let $(M,g)$ be a compact Riemannian spin manifold, and $(N,h)$ a Riemannian manifold. For a map $f:M\to N$ and a twisted spinor $\phi\in \Gamma(\Sigma M\otimes f^*TN)$ we define
  \begin{eqnarray}
   \cE_1(f) &:=& \frac12 \int_M |df|^2\dvol^g\nonumber\\
   \cE_2(f,\phi) &:=&  \frac12 \int_M \< \phi,\Dirac^f\phi\> \dvol^g\nonumber\\
   \cE(f,\phi) &:=&  \cE_1(f)  + \cE_2(f,\phi) \,.\label{eq.energy.supersym}
  \end{eqnarray}
  The functional $\mathcal{E}$ is called the \emph{super-symmetric energy functional}.
  We define
  \begin{eqnarray*}
   \tau(f) &:=& -\frac{\partial \cE_1}{\partial f}=  \tr \nabla d f \in \Gamma(f^*TN)\\
    \cR(f,\phi) &:=& \frac{\partial \cE_2}{\partial f}\\
    \Psi(f,\phi)  &:=& \frac{\partial \cE_2}{\partial \phi}=\Dirac^f\phi\,.
  \end{eqnarray*}
  In these equations, $\Dirac^f$ denotes the twisted Dirac operator acting on  $\Gamma(\Sigma M\otimes f^*TN)$. Further, we used the canonical $L^2$-scalar product on $\Gamma(f^*TN)$ and on  $\Gamma(\Sigma\otimes f^*TN)$, in order to identify the partial derivatives of $\cE_1$ and $\cE_2$, given above, with the corresponding gradients.
  
  Stationary points of $\cE$ are called \emph{Dirac-harmonic maps}. Thus this condition is equivalent to
  \begin{equation}\label{eq.lagrange}
    \Dirac^f\phi=0\text{ and }\tau(f)= \cR(f,\phi).
  \end{equation}
  We say that $(f,\phi)$ is \emph{uncoupled}, if  $\cR(f,\phi)=0$. In this publication we only consider \emph{smooth} {Dirac-harmonic maps.  
  \begin{remark*}
    Note that $\Sigma M\otimes f^*TN $ will always denote the real tensor product, even when  $f^*TN$ and $\Sigma M$ carry natural complex structures.
    Note that  $\Sigma M$ may be defined as a real, a complex, a quaternionic or a $\Cl_m$-linear Dirac operator, this does not matter. For simplicity we only restrict to the complex version in this article, as it is classically the most studied one.
  \end{remark*}
  \begin{remark*}
  The terminology for the $\cR$-term is standard, see e.g. \cite[Page 1514, after (9)]{jost.mo.zhu:09}, \cite[(1.3)]{chen.jost.sun.zhu:19}.
\end{remark*}

\begin{theorem}\label{R.null}
  Let $(f_0,\phi_0)$ be given. We assume that there is an open neighborhood $U$ of $f_0$, and a $C^1$-map $U\ni f\mapsto \hat\phi(f)$, such that $\Dirac^f(\hat\phi(f))=0$ and such that  $\hat\phi(f_0)=\phi_0$.
Then $(f_0,\phi_0)$ is uncoupled. 
\end{theorem}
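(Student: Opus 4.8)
The plan is to exploit that the assumed family $\hat\phi(f)$ forces the functional $\cE_2$ to vanish identically near $f_0$, and then to differentiate this identity. Concretely, define the composite function $h\colon U\to\RR$ by $h(f):=\cE_2(f,\hat\phi(f))$. Because $\Dirac^f(\hat\phi(f))=0$ for every $f\in U$ by hypothesis, the integrand $\langle\hat\phi(f),\Dirac^f\hat\phi(f)\rangle$ vanishes pointwise, so $h\equiv 0$ on $U$. In particular the differential $dh$ vanishes at $f_0$.

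Next I would compute $dh$ at $f_0$ by the chain rule, viewing $\cE_2$ as a function of the two arguments $f$ and $\phi$. Along any variation $f_t$ of $f_0$ with $\frac{d}{dt}f_t|_{t=0}=V\in\Gamma(f_0^*TN)$, one has
$$\frac{d}{dt}\Big|_{t=0} h(f_t)=\Big\langle \frac{\partial \cE_2}{\partial f}(f_0,\phi_0),V\Big\rangle + \Big\langle \frac{\partial\cE_2}{\partial\phi}(f_0,\phi_0),\frac{d}{dt}\Big|_{t=0}\hat\phi(f_t)\Big\rangle,$$
where the second scalar product is the $L^2$-product on $\Gamma(\Sigma M\otimes f_0^*TN)$. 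By definition the first partial derivative is $\cR(f_0,\phi_0)$, while the second is $\Psi(f_0,\phi_0)=\Dirac^{f_0}\phi_0$. Since $\phi_0=\hat\phi(f_0)$ and the family satisfies $\Dirac^{f_0}\hat\phi(f_0)=0$, the factor $\frac{\partial\cE_2}{\partial\phi}(f_0,\phi_0)$ vanishes, so the second term drops out \emph{regardless} of the value of $\frac{d}{dt}\hat\phi(f_t)$---this is exactly why only $C^1$-regularity of $f\mapsto\hat\phi(f)$ is needed, and why the (subtle, bundle-dependent) meaning of that $\phi$-derivative never has to be pinned down. Combining with $dh|_{f_0}=0$ yields $\langle\cR(f_0,\phi_0),V\rangle=0$ for all $V$, hence $\cR(f_0,\phi_0)=0$, which is the asserted uncoupledness.

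The step that requires genuine care, and the main obstacle, is the rigorous justification of this chain rule in the infinite-dimensional setting: one must know that $\cE_2$ is (Fr\'echet-)differentiable jointly in $(f,\phi)$ on the relevant space of smooth maps and spinors, and that the $C^1$-assumption on $f\mapsto\hat\phi(f)$ makes $h$ differentiable with precisely the product-rule decomposition above. Equally important---and used implicitly in the identification $\frac{\partial\cE_2}{\partial\phi}=\Dirac^f\phi$---is that $\Dirac^f$ is formally self-adjoint on the \emph{closed} manifold $M$, so that varying $\phi$ in $\tfrac12\int_M\langle\phi,\Dirac^f\phi\rangle$ produces no boundary term and the two symmetric contributions combine into $\int_M\langle\Dirac^f\phi,\delta\phi\rangle$; this is what makes the factor $\tfrac12$ cancel correctly and is the feature singled out in the introduction as essential for the argument.
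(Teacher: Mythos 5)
Your proof is correct and is essentially the paper's own argument: both differentiate the identically vanishing composite $f\mapsto\cE_2(f,\hat\phi(f))$ and use $\Psi(f_0,\phi_0)=\Dirac^{f_0}\phi_0=0$ to kill the $\phi$-derivative term, leaving $\cR(f_0,\phi_0)=0$ (the paper merely carries the $\cE_1$-terms along on both sides, where they cancel). Your explicit remarks on the chain rule in infinite dimensions and on the role of self-adjointness of $\Dirac^f$ on the closed manifold $M$ match the caveats the paper itself raises in the introduction.
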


\begin{proof}
  Obviously we have $\cE_2(f,\hat\phi(f))=0$. In the following we write $d/df$ for the total\footnote{In the sense used in Lagrangian mechanics: we derive the expression $f\mapsto  \cE(f,\hat\phi(f))$} derivative with respect to $f$ and -- as before -- $\partial/\partial f$ for the partial derivative with respect to~$f$. We thus get
  \begin{eqnarray*}
    -\tau(f) &=& \frac{d}{df} \cE_1(f)\\
     &=& \frac{d}{df} \Bigl(\cE\bigl(f,\hat\phi(f)\bigr)\Bigr)\\
     &=& \frac{\partial\cE}{\partial f}\Stelle{f}  + \frac{\partial\cE }{\partial \phi}\Stelle{(f,\hat\phi(f))} \, \frac{\partial\hat\phi }{\partial f}\Stelle{f} \\
     &=& \frac{\partial\cE_1}{\partial f}\Stelle{f} +\frac{\partial\cE_2}{\partial f}\Stelle{(f,\hat\phi(f))}  + \underbrace{\frac{\partial\cE_2}{\partial \phi}\Stelle{(f,\hat\phi(f))} }_{=\Psi(f,\hat\phi(f))=0}\frac{\partial\hat\phi }{\partial f}\Stelle{f}\\
    &=& -\tau(f) +\cR(f,\hat\phi(f)).
  \end{eqnarray*}
  and this obviously implies $\cR(f_0,\phi_0)=0$.
\end{proof}

\begin{corollary}
  Suppose $f_0$ has a neighborhood $U$ such that for all $f\in U$ we have
  $\dim \Dirac^f\geq \dim \Dirac^{f_0}$. Then for every $\phi_0\in \Ker \Dirac^{f_0}$  the pair  $(f_0,\phi_0)$ is uncoupled.
\end{corollary}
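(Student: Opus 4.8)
The plan is to produce, for any prescribed $\phi_0\in\Ker\Dirac^{f_0}$, a $C^1$ family $U\ni f\mapsto\hat\phi(f)$ with $\Dirac^f(\hat\phi(f))=0$ and $\hat\phi(f_0)=\phi_0$, and then to quote Theorem~\ref{R.null}. Throughout, $\dim\Dirac^f$ is read as $\dim\Ker\Dirac^f$. The conceptual point is that on the closed manifold $M$ the operator $\Dirac^f$ is self-adjoint and elliptic, so $f\mapsto\dim\Ker\Dirac^f$ is automatically upper semicontinuous; the hypothesis provides the opposite inequality, hence the kernel dimension is constant near $f_0$, and this constancy is exactly what lets one transport a given kernel element smoothly in $f$.

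Before doing spectral theory one must address that $\Dirac^f$ acts on sections of the $f$-dependent bundle $\Sigma M\otimes f^*TN$. For $f$ that is $C^1$-close to $f_0$ the point $f(x)$ is close to $f_0(x)$ for every $x\in M$, so parallel transport in $N$ along the short geodesic from $f_0(x)$ to $f(x)$ furnishes a fibrewise isometry $f_0^*TN\to f^*TN$, and hence a unitary isomorphism $\Sigma M\otimes f_0^*TN\to\Sigma M\otimes f^*TN$ that depends $C^1$ (in fact smoothly) on $f$. Conjugating $\Dirac^f$ by this isomorphism produces a $C^1$ family $f\mapsto D_f$ of self-adjoint elliptic operators on the \emph{fixed} Hilbert space $H_0=L^2(\Sigma M\otimes f_0^*TN)$, with $D_{f_0}=\Dirac^{f_0}$ and $\Ker D_f$ identified with $\Ker\Dirac^f$.

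Now choose $\delta>0$ so that $0$ is the only point of the spectrum of $\Dirac^{f_0}$ in $[-\delta,\delta]$, and set
\[
  P(f)=\frac{1}{2\pi i}\oint_{|z|=\delta}(z-D_f)^{-1}\,dz.
\]
By the resolvent estimates of analytic perturbation theory (Kato) this is a $C^1$ family of finite-rank projections on $H_0$; since no eigenvalue can cross $\pm\delta$ for $f$ close to $f_0$, its rank equals $\dim\Ker\Dirac^{f_0}$. As $\Ker D_f$ is contained in the image $P(f)H_0$, we obtain $\dim\Ker\Dirac^f=\dim\Ker D_f\le\mathrm{rank}\,P(f)=\dim\Ker\Dirac^{f_0}$, and the hypothesis forces equality, so $\Ker D_f=P(f)H_0$ on a neighborhood of $f_0$. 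Transporting back through the bundle isomorphism, let $\hat\phi(f)$ be the image of $P(f)\phi_0$. Then $\hat\phi(f)\in\Ker\Dirac^f$, while $\hat\phi(f_0)=P(f_0)\phi_0=\phi_0$ because $P(f_0)$ is the orthogonal projection onto $\Ker\Dirac^{f_0}$; moreover $f\mapsto\hat\phi(f)$ is $C^1$, and elliptic regularity makes each $\hat\phi(f)$ a genuine smooth section. Theorem~\ref{R.null} then yields that $(f_0,\phi_0)$ is uncoupled.

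The main obstacle I expect is not the semicontinuity argument, which is clean, but making ``$C^1$ dependence of $P(f)$ on $f$'' rigorous: one has to fix the function-space topologies (Sobolev or $C^k$) in which $f\mapsto D_f$ is genuinely $C^1$, verify that $(z-D_f)^{-1}$ is $C^1$ in $f$ uniformly for $z$ on the contour, and confirm that the resulting $\hat\phi$ is $C^1$ in the topology demanded by Theorem~\ref{R.null}. These are routine but delicate points of the perturbation theory of unbounded self-adjoint operators, and they are where the real work lies.
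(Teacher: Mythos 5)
Your proof is correct and follows exactly the route the paper intends: the hypothesis together with upper semicontinuity of $\dim\Ker\Dirac^f$ makes the kernel dimension locally constant, the spectral projection $P(f)$ then yields the required $C^1$ family $\hat\phi(f)=P(f)\phi_0$ with $\hat\phi(f_0)=\phi_0$, and Theorem~\ref{R.null} applies. The paper states the corollary without written proof, but this minimal-kernel/spectral-projection argument is precisely the one it relies on.
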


Or as a special case we get.
\begin{corollary}\label{dirac-harm.trivial} 
  Suppose $(f_0,\phi_0)$  is a Dirac-harmonic map with $M$ closed. Then
  \begin{enumerate}
    \item $(f_0,\phi_0)$ is uncoupled, or 
    \item any neighborhood $U$ of $f_0$ contains an $f\in U$ with $\dim \Dirac^f< \dim \Dirac^{f_0}$.
    \end{enumerate}
\end{corollary}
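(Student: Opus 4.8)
The plan is to prove the corollary in its contrapositive form and reduce it to the preceding corollary. Alternatives (1) and (2) are not independent assertions but two ways of splitting a dichotomy, so it suffices to show that the failure of (2) forces (1). Assume therefore the negation of (2): there exists a neighborhood $U$ of $f_0$ such that $\dim\Ker\Dirac^f\geq\dim\Ker\Dirac^{f_0}$ for every $f\in U$ (I write $\dim\Ker\Dirac^f$ for the quantity the statement abbreviates as $\dim\Dirac^f$). Since $(f_0,\phi_0)$ is Dirac-harmonic, the first equation in \eqref{eq.lagrange} gives $\Dirac^{f_0}\phi_0=0$, i.e. $\phi_0\in\Ker\Dirac^{f_0}$. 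The preceding corollary then applies verbatim and yields that $(f_0,\phi_0)$ is uncoupled, which is exactly alternative (1).

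The substance therefore lies in the preceding corollary, and my proof plan is really to supply its engine: under the hypothesis $\dim\Ker\Dirac^f\geq\dim\Ker\Dirac^{f_0}$ on $U$, I must produce the $C^1$-family $f\mapsto\hat\phi(f)$ with $\Dirac^f(\hat\phi(f))=0$ and $\hat\phi(f_0)=\phi_0$ demanded by Theorem~\ref{R.null}. First I would record that, as $M$ is closed, each $\Dirac^f$ is a self-adjoint elliptic operator with discrete real spectrum, and that $f\mapsto\Dirac^f$ is $C^1$ in a suitable operator topology (the $f$-dependence enters only through the pullback connection on $f^*TN$, i.e. through zeroth-order coefficients built from $f$ and $df$). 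Continuity of the family forces continuity of the eigenvalues, whence $f\mapsto\dim\Ker\Dirac^f$ is \emph{upper} semicontinuous: eigenvalues nonzero at $f_0$ stay nonzero nearby, so the multiplicity of $0$ can only drop. Together with the standing hypothesis $\dim\Ker\Dirac^f\geq\dim\Ker\Dirac^{f_0}$ this pins the kernel dimension to be locally constant on a possibly smaller neighborhood.

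With the kernel dimension locally constant I would isolate $0$ from the rest of the spectrum: by discreteness at $f_0$ and continuity of eigenvalues there is an $\ep>0$ and a neighborhood on which $\pm\ep$ avoid the spectrum and the only spectral value of $\Dirac^f$ in $(-\ep,\ep)$ is $0$, of multiplicity $\dim\Ker\Dirac^{f_0}$. The Riesz spectral projection
\[
  P_f \;=\; \frac{1}{2\pi i}\oint_{|\lambda|=\ep}(\lambda-\Dirac^f)^{-1}\,d\lambda
\]
then depends $C^1$ on $f$, because the resolvent does, and projects onto $\Ker\Dirac^f$ exactly: the spectral subspace for $(-\ep,\ep)$ contains $\Ker\Dirac^f$ and has the same locally constant dimension. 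Setting $\hat\phi(f):=P_f\phi_0$ yields $\Dirac^f(\hat\phi(f))=0$, $\hat\phi(f_0)=P_{f_0}\phi_0=\phi_0$ (as $\phi_0\in\Ker\Dirac^{f_0}$), and $C^1$ dependence on $f$, so Theorem~\ref{R.null} applies and delivers uncoupledness.

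I expect the main obstacle to be the analytic bookkeeping that makes $P_f$ genuinely $C^1$: one must fix the functional-analytic framework (a common domain, namely the Sobolev space $H^1(\Sigma M\otimes f^*TN)$ realized after trivializing the bundles along a path of maps, or a graph/gap-topology description of the family of unbounded self-adjoint operators) in which $f\mapsto\Dirac^f$ is $C^1$ with values in closed self-adjoint operators, and then verify that the resolvent inherits this regularity uniformly on the contour $|\lambda|=\ep$. Once this is in place the upper-semicontinuity and local-constancy step and the projection formula are routine. Closedness of $M$ is used twice: to guarantee discreteness of the spectrum, so that $0$ can be isolated, and, through the absence of boundary terms, to legitimize the variational identity underlying Theorem~\ref{R.null}.
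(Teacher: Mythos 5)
Your reduction is exactly the paper's: Corollary~\ref{dirac-harm.trivial} is presented there as an immediate special case of the preceding corollary, obtained by noting that $\phi_0\in\Ker\Dirac^{f_0}$ by \eqref{eq.lagrange} and taking the contrapositive of the dichotomy. The paper leaves the preceding corollary itself unproven; your construction of the $C^1$ family $\hat\phi(f)=P_f\phi_0$ via the Riesz spectral projection (after identifying the bundles $f^*TN$ and $f_0^*TN$ for nearby $f$), feeding into Theorem~\ref{R.null}, is the standard argument the author evidently has in mind, and it is correct.
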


This corollary shows that the construction of coupled Dirac-harmonic maps with compact domain is difficult. This fits to the fact that to the author's knowledge no coupled Dirac-harmonic map with closed domain has been found so far.

Note that these results may be easily generalized to compact manifolds with boundary with suitable homogeneous boundary conditions. \mnote{More here}However, our methods are not robust enough to generalize to more general type of functionals $\cE$, \eg to the one with quartic spinor term discussed in \cite[(2.4)]{jost.liu.zhu:22}.

\begin{remark}For studying harmonic maps, Sacks and Uhlenbeck \cite{sacks.uhlenbeck:81} applied a perturbation of the energy functional successfully, namely they defined for $\alpha\in (1,2)$
  $$\cE_\alpha(f) = \frac12 \int_M (1+|df|^2)^\alpha\dvol^g.$$
Recently, a similar perturbation was also used for the super-symmetric energy functional, \ie one defines $\cE(f,\phi)=\cE_\alpha(f)+\cE_2(f,\phi)$, see \eg \cite[Eq. (1.2)]{jost.zhu:21CalcVar}, \cite{jost.zhu:21} and \cite{li.liu.zhu.zhu:21}.  
Obviously Theorem~\ref{R.null} remains true for this modification. The condition of Theorem~\ref{R.null} are satisfied as index theoretical methods and minimal kernel methods are used. As a consequence all $\alpha$-Dirac-harmonic maps in \cite{jost.zhu:21} are uncoupled and most of the results for ($\alpha$)-Dirac-harmonic maps in \cite{jost.zhu:21} directly follow from the corresponding statements for ($\alpha$)-harmonic maps of the same article. In the results in  \cite[Eq. (1.1)]{jost.zhu:21CalcVar} an additional perturbation $F$ is allowed, which makes our trick non-applicable, while it applies to stationary points of (1.2) in the same article. Again, it seems unanswered whether coupled $\alpha$-Dirac-harmonic maps with closed domain exist.
\end{remark}


\end{document}